\def\S{\mathbb{S}}
\newtheorem{theo}{\bf{Theorem}}[section]
\newtheorem{prob}[theo]{Problem}
\theoremstyle{definition}
\newtheorem{defi}[theo]{Definition}
\newtheorem{rem}[theo]{Remark}
\begin{document}
\title{Bipartite Minors}

\author{Maria Chudnovsky}
\address{Department of Industrial Engineering and Operations Research,
Columbia University, New York, NY 10027, USA}
\email{mchudnov@columbia.edu}

\author{Gil Kalai}
\address{{Institute of Mathematics, Hebrew University of Jerusalem,
Jerusalem 91904, Israel}
{\tiny and}
{Department of Computer Science and  Department of Mathematics,
Yale University New Haven, CT 06511, USA}}
\email{kalai@math.huji.ac.il}

\author{Eran Nevo}
\address{
Department of Mathematics,
Ben Gurion University of the Negev,
Be'er Sheva 84105, Israel
}
\email{nevoe@math.bgu.ac.il}

\author{Isabella Novik}
\address{
Department of Mathematics, Box 354350,
University of Washington, Seattle, WA 98195-4350, USA
}
\email{novik@math.washington.edu}

\author{Paul Seymour}
\address{Department of Mathematics, Princeton University,
Washington Rd, Princeton, NJ 08544, USA}
\email{pds@math.princeton.edu}

\thanks{Research of the first author was partially supported by NSF Grant
DMS-1265803, of the second author by ERC advanced grant 320924,
ISF grant 768/12, and NSF grant DMS-1300120,
of the third author by Marie Curie grant IRG-270923 and ISF grant 805/11,
of the fourth author by NSF grant DMS-1069298, and of the fifth author by
ONR grant N00014-10-1-0680 and NSF grant DMS-1265563.}

\begin{abstract}
We introduce a notion of bipartite minors and prove a bipartite
analog of Wagner's theorem: a bipartite graph is planar if and only if it
does not contain $K_{3,3}$ as a bipartite minor. Similarly, we provide a forbidden
minor characterization for outerplanar graphs and forests. We then establish a
recursive characterization of bipartite $(2,2)$-Laman graphs --- a certain
family of graphs that contains all maximal bipartite planar graphs.
\end{abstract}

\maketitle

\section{Introduction}
Wagner's celebrated theorem \cite{Wagner:Kuratowski},
\cite[Theorem 4.4.6]{Diestel} provides a characterization
of planar graphs in terms of minors: a graph $G$ is planar if and only if
it contains neither $K_5$ nor $K_{3,3}$ as a minor. Unfortunately, a minor
of a bipartite graph is not always bipartite as contracting edges destroys
2-colorability. Here, we introduce a notion of a {\em bipartite minor}: an
operation that applies to bipartite graphs and outputs bipartite graphs.
We then prove a bipartite analog of Wagner's theorem:
a bipartite graph is planar if and only if it does not contain $K_{3,3}$
as a bipartite minor. Similarly, we provide a forbidden bipartite minor
characterization for bipartite outerplanar graphs and forests.

All the graphs considered in this note are simple graphs. A graph
with vertex set $V$ and edge set $E$ is denoted by $G=(V,E)$.
We denote the edge connecting
vertices $i$ and $j$ by $ij$. 
A graph is {\em bipartite} if there exists a bipartition
(or bicoloring in red and blue)
of the vertex set $V$ of $G$, $V=A\uplus B$, in such a way that no two
vertices from the same part are connected by an edge.
When discussing bipartite graphs,
we fix such a bipartition and write $G=(A\uplus B,E)$; we refer to $A$
and $B$ as {\em parts} or {\em sides} of $G$.

As bipartite planar graphs with $n\geq 3$ vertices have at most $2n-4$ edges,
and as all their subgraphs are also bipartite and planar, and hence satisfy the
same restriction on the number of edges, it is natural to consider the family
of maximal bipartite graphs possessing this property. Specifically, we say that
a bipartite graph $G=(A\biguplus B, E)$ with $|A|\geq 2$
and $|B|\geq 2$ is {\em $(2,2)$-Laman} if (i) $G$
has exactly $2(|A|+|B|)-4$ edges, and
(ii) every subgraph $H$ of $G$ with at least 3 vertices
has at most $2|V(H)|-4$ edges.
Note that the family of $(2,2)$-Laman graphs is strictly larger than that
of maximal bipartite planar graphs: indeed, taking $n\geq 2$ copies of
$K_{3,3}$ minus an edge, and gluing all these copies together along the
two vertices of the missing edge, produces a graph on $4n+2$ vertices
with $8n$ edges; this graph is $(2,2)$-Laman, but it is not planar.

Our second main result is a recursive characterization of $(2,2)$-Laman graphs.
We remark that the name $(2,2)$-Laman is motivated by Laman's
theorem \cite{Laman} from rigidity theory of graphs, and
its relation to a recent theory of rigidity for bipartite graphs can be
found in \cite{BipRig}. As such, this paper is a part of a project
to understand notions of minors and graph-rigidity for bipartite
graphs as well as to understand higher-dimensional generalizations.

The rest of this note is organized as follows: in Section 2 we define
bipartite minors and prove the bipartite analog of Wagner's theorem
and analogous theorems for bipartite outerplanar graphs and forests
(deferring treatment of some of the cases to the Appendix). Then in
Section 3 we discuss $(2,2)$-Laman graphs.

\section{Wagner's theorem for bipartite graphs}
We start by defining a couple of basic operations on (bipartite) graphs.
If $G=(V,E)$ is a graph and $v$ is a vertex of $G$, then $G-v$ denotes
the induced subgraph of $G$ on the vertex set $V-\{v\}$.
If $G=(V=A\uplus B,E)$ is a bipartite graph and $u,v$ are two vertices
from the same part, then the {\em contraction} of $u$ with $v$ is a graph
$G'$ on the vertex set $V-\{u\}$ obtained from $G$ by identifying $u$ with
$v$ and deleting the extra copy from each double edge that was created.
Observe that $G'$ is also bipartite.

Recall that if $G$ is a graph and $C$ is a cycle of $G$, then $C$
is {\em non-separating} if the removal of the vertices of $C$ from
$G$ does not increase the number of connected components.
A cycle $C$ is {\em induced} (or chordless) if each two nonadjacent
vertices of $C$ are not connected by an edge in $G$. Induced non-separating cycles
are known in the literature as {\em peripheral} cycles.

We now come to the main definition of this section.
\begin{defi} \label{def:bip-minor}
Let $G$ be bipartite graph. We say that a graph $H$ is a
{\em bipartite minor of} $G$, denoted $H <_b G$,
if there is a sequence of graphs $G=G_0,G_1,\ldots,G_t=H$
where for each $i$, $G_{i+1}$ is obtained from $G_i$ by either
deletion (of a vertex or an edge) or  \emph {admissible contraction}.
A contraction of a vertex $u$ with a vertex $v$ in $G_i$ is called \emph{admissible}
if $u$ and $v$ have a common neighbor in $G_i$,
and at least one of these common neighbors,
say $w$, is such that the path $(u,w,v)$ is a part of a peripheral
cycle in $G_i$.
\end{defi}
For instance, applying an admissible contraction to an $8$-cycle
results in a $6$-cycle plus an edge attached to this cycle at one vertex.
Note that since each admissible contraction identifies two vertices that have
a common neighbor, these two vertices are from the same part of the graph. Thus
all bipartite minors are bipartite graphs. The importance of the notion
of bipartite minors is explained by the following result that can be
considered as a bipartite analog of Wagner's theorem.

\begin{theo}\label{thm:bipWagner}
A bipartite graph $G$ is planar if and only if $G$ does not
contain $K_{3,3}$ as a bipartite minor.
\end{theo}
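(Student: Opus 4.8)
The plan is to prove the two directions separately; the forward direction, that planarity forbids a $K_{3,3}$ bipartite minor, is the easier one. I would show that each allowed operation preserves planarity, so that no non-planar graph, and in particular $K_{3,3}$, can be a bipartite minor of a planar $G$. Vertex and edge deletions obviously preserve planarity, so the heart is the admissible contraction. The key geometric fact is that a peripheral cycle $C$ bounds a face in every planar embedding: fixing an embedding on the sphere, the simple closed curve $C$ separates it into two open disks, and since $C$ is non-separating all of $G-V(C)$ lies in one disk, while since $C$ is induced the other disk contains no chord and is therefore a face. Given an admissible contraction of $u$ with $v$ through a common neighbor $w$ with $(u,w,v)$ lying on such a $C$, I would embed $G$, draw the edge $uv$ inside the empty face bounded by $C$ (preserving planarity), and then contract this edge in the ordinary sense. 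Because $u$ and $v$ lie in the same part, ordinary contraction of $uv$ produces exactly the admissible contraction of $u$ with $v$, and ordinary edge contraction preserves planarity; induction on the length of the defining sequence completes this direction.

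For the converse I argue the contrapositive: a non-planar bipartite graph $G$ contains $K_{3,3}$ as a bipartite minor. By Kuratowski's theorem $G$ has a subgraph $H$ that is a subdivision of $K_5$ or of $K_{3,3}$; deleting every edge and vertex outside $H$ is a legal sequence of deletions, so it suffices to prove that a bipartite subdivision of $K_5$, respectively of $K_{3,3}$, has $K_{3,3}$ as a bipartite minor.

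Two reduction moves drive this. The first is path-shortening: along a subdivided edge $x_0 x_1 x_2 \cdots$ with $x_1$ of degree two I contract $x_0$ with $x_2$ through $x_1$, which leaves $x_1$ pendant, and then delete $x_1$; this shortens the path by two and preserves its parity, exactly the phenomenon noted after Definition~\ref{def:bip-minor} whereby an $8$-cycle becomes a $6$-cycle with a pendant edge. Applying this exhaustively reduces $H$ to $K_5$ or $K_{3,3}$ with certain edges subdivided exactly once, and bipartiteness severely constrains which edges these are, since every cycle of the base graph must acquire an even total subdivision. The second move handles a branch vertex $b$ whose incident edges cannot all be shortened to length one: I contract the subdivision vertices adjacent to $b$ with one another through the common neighbor $b$, merging them into a single vertex of the opposite color and leaving $b$ pendant, after which $b$ is deleted. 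In the $K_{3,3}$ case a short case analysis on the allowed subdivision patterns shows that some combination of these moves always recovers $K_{3,3}$; in the $K_5$ case one first uses the three branch vertices sharing a color as one side of the target $K_{3,3}$ and then produces the opposite side by merging subdivision vertices routed through the remaining two branch vertices, with several sub-cases according to how the five branch vertices are colored.

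The step I expect to be the main obstacle, and the reason several cases are deferred to the Appendix, is maintaining admissibility: at each contraction the contracted triple must lie on a cycle that is induced and non-separating in the \emph{current} graph, and this must be re-verified as pendant vertices are created and deleted and the cyclic structure changes. For subdivisions of the $3$-connected graphs $K_5$ and $K_{3,3}$ every edge initially lies on a peripheral cycle, which makes the $K_{3,3}$ case largely routine; controlling peripheral cycles through the $K_5$-to-$K_{3,3}$ reduction, where the graph is repeatedly altered, is where the real difficulty and the bulk of the case analysis lie.
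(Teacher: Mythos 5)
Your proposal follows essentially the same route as the paper: for the forward direction, deletions plus contracting $u$ with $v$ inside the face bounded by the peripheral cycle through $(u,w,v)$; for the converse, Kuratowski's theorem followed by the contract-and-delete shortening of twice-subdivided edges to reduce to bipartite subdivisions of $K_5$ and $K_{3,3}$ in which every edge is subdivided at most once, with the parity of cycles forcing exactly the monochromatic edges to be subdivided. The finite case analysis you defer is precisely what the paper relegates to its Appendix, where the nine graphs $G_{(i)}$ and $G_{(i,j)}$ are handled by explicit contraction sequences with witness peripheral cycles certifying admissibility at each step.
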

\begin{proof} First assume that $G$ is planar.
We may assume that $G$ is connected.
To verify that $G$ does not
contain $K_{3,3}$ as a bipartite minor, it suffices to show that deletions and
admissible contractions preserve planarity.  This is clear for deletions.
For admissible contractions, consider an embedding
of $G$ in a 2-sphere $\S^2$, and let $C$ be a peripheral cycle of
$G$ that contains a path $(u,w,v)$. By the Jordan-Sch\"onflies theorem,
the complement of the image of $C$ in $\S^2$ consists of
two components, each homeomorphic to an open 2-ball.
As $C$ is peripheral, one
of these components contains no vertices/edges of $G$,
and hence is a face of the embedding of $G$. Contracting $u$
with $v$ ``inside this face'' produces an embedding of the
resulting graph in $\S^2$.

Assume now that $G$ is not planar. We must show that $G$ contains $K_{3,3}$
as a bipartite minor. By  Kuratowski's theorem \cite[Theorem 4.4.6]{Diestel},
$G$ contains a subgraph $H$ that is a subdivision of either $K_5$ or
$K_{3,3}$. Hence, it only remains to show that $K_{3,3} <_b H$. We first treat
the case where an edge $e$ of the original $K_5$ (or $K_{3,3}$) is subdivided at
least twice. Let $C$ be a peripheral cycle of the original
$K_5$ (or $K_{3,3}$) that contains $e$ (there exists such $C$ --- a
$3$-cycle for $K_5$ and a $4$-cycle for $K_{3,3}$), let $C'$ be the subdivision of
$C$ in $H$, and let $(a,b,a')$ be a path of length two in $H$ that
is contained in $e$. Then $(a,b,a')$ is a part of a peripheral cycle $C'$
in $H$, and hence contracting $a'$ with $a$ is an admissible contraction in $H$.
Performing this contraction and then deleting $b$, we obtain
a new bipartite subdivision $H'$ of $K_5$ (or $K_{3,3}$) that
 subdivides $e$ with two fewer interior vertices than $H$, but
agrees with $H$ on all other edges of $K_5$ ($K_{3,3}$, respectively).
Thus, we can assume that each edge of the original $K_5$ (or $K_{3,3}$)
is subdivided at most once. By symmetry, this reduces the
problem of finding $K_{3,3}$ as a bipartite minor of $H$ to finding $K_{3,3}$
as a bipartite minor of the nine bipartite graphs described below.
These cases are treated in the Appendix.

There are three bipartite graphs which are subdivisions of $K_5$ to consider,
denoted by $G_{(i)}$ for $i=5,4,2$, where $G_{(i)}$ is the graph obtained from $K_5$
by coloring $i$ of its vertices red, the other $5-i$ blue, then subdividing
each monochromatic edge once, and coloring the subdivision vertex red/blue
so that its color is opposite to that of the vertices of the original monochromatic edge.
For example, $G_{(5)}$ is the barycentric subdivision of $K_5$, endowed with a $2$-coloring.
Note that no edge of $K_5$ that connects two vertices of opposite colors is subdivided.

There are six bipartite graphs which are subdivisions of
$K_{3,3}$ to consider, denoted by $G_{(i,j)}$ and defined as follows.
Let $X$ and $Y$ be the two sides of $K_{3,3}$. Then
$G_{(i,j)}$ is the graph obtained from $K_{3,3}$ by first
(i) coloring red exactly $i$ vertices from $X$ and $j$ vertices from $Y$,
and coloring blue the other $6-i-j$ vertices;
then (ii) subdividing each monochromatic edge once,
and coloring the subdivision vertex red/blue as before,
so that a proper $2$-coloring is obtained. Note that as before, no edge of $K_{3,3}$
that connects two vertices of opposite colors is subdivided.
Up to symmetry, the six graphs we need to consider are
$G_{(3,3)}, G_{(3,2)}, G_{(3,1)}, G_{(3,0)}, G_{(2,2)}, G_{(2,1)}$.
(Observe that $G_{(3,0)}=K_{3,3}$, and so this case is trivial.)
\end{proof}

\begin{rem} It is worth noting that
the barycentric subdivision of $K_5$ (which is a bipartite graph)
does not contain subgraphs homeomorphic to $K_{3,3}$.
\end{rem}

A graph is \emph{outerplanar} if it can be embedded in the plane in
such a way that all of the vertices lie on the outer boundary.
Equivalently, a graph $G$ is outerplanar if adding a new vertex to $G$
and connecting it to all vertices of $G$ results in a planar graph;
we denote this graph by $\widehat{G}$.
Outerplanar graphs are characterized by not having as a minor $K_4$
and $K_{2,3}$. Here is a bipartite analogue of this result for
bipartite minors; the proof is similar to the proof of
Theorem \ref{thm:bipWagner}.

\begin{theo}\label{thm:bipOuterplanar}
A bipartite graph $G$ is outerplanar if and only if $G$ does not
contain $K_{2,3}$ as a bipartite minor.
\end{theo}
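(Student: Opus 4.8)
The plan is to follow the two halves of the proof of Theorem~\ref{thm:bipWagner}, exploiting the equivalence that $G$ is outerplanar if and only if $\widehat{G}$ is planar in order to transport the ``necessity'' direction to the already-proved planar case, and using the forbidden-subdivision description of outerplanarity for ``sufficiency''.

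\emph{Necessity.} First I would record that $K_{2,3}$ is itself not outerplanar (it is planar, but no planar embedding places all five vertices on a single face). Since every bipartite minor of $G$ is reached from $G$ by deletions and admissible contractions, it then suffices to show that both operations preserve outerplanarity. Deletions are immediate, as any subgraph of an outerplanar graph is outerplanar. For an admissible contraction of $u$ with $v$ along a path $(u,w,v)$ lying on a peripheral cycle $C$ of $G$, I would pass to $\widehat{G}$ with its apex $z$. Writing $G'$ for the contracted graph, one checks directly that $\widehat{G'}=\widehat{G}/(u\sim v)$, because the apex is adjacent to every original vertex both before and after the identification and no edges among the vertices of $G$ are altered. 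The cycle $C$ is still a cycle of $\widehat{G}$; it remains induced, since the only new edges of $\widehat{G}$ meet $z\notin C$ and hence create no chord of $C$, and it remains non-separating, since $\widehat{G}$ and $\widehat{G}-V(C)$ are both connected through $z$. Thus $C$ is peripheral in $\widehat{G}$ and still contains $(u,w,v)$, so contracting $u$ with $v$ in $\widehat{G}$ preserves planarity by exactly the argument used in the forward direction of Theorem~\ref{thm:bipWagner}, which invokes only the peripheral cycle and the Jordan--Sch\"onflies theorem and makes no use of bipartiteness. Therefore $\widehat{G'}$ is planar and $G'$ is outerplanar.

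\emph{Sufficiency.} For the converse I would use the classical fact that a graph fails to be outerplanar exactly when it contains a subgraph $H$ that is a subdivision of $K_4$ or of $K_{2,3}$, so it remains to prove $K_{2,3}<_b H$. As in Theorem~\ref{thm:bipWagner}, I first cut down the number of subdivision points: every edge of $K_4$ lies on a triangle and every edge of $K_{2,3}$ lies on a $4$-cycle, and these triangles and $4$-cycles are peripheral in $K_4$ and $K_{2,3}$ respectively; hence a doubly subdivided edge can be shortened by one admissible contraction followed by a deletion, just as before. This leaves only finitely many bipartite graphs to inspect, namely the $2$-colorings of $K_4$ and of $K_{2,3}$ in which each monochromatic edge is subdivided once (the natural coloring of $K_{2,3}$ giving $H=K_{2,3}$ outright). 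For each such graph one exhibits $K_{2,3}$ as a bipartite minor, and I would defer this finite check to the Appendix, as is done for Theorem~\ref{thm:bipWagner}.

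\emph{Main obstacle.} The conceptual content sits in the necessity direction, whose one subtle point---that an admissible contraction of $G$ corresponds to a planarity-preserving contraction of $\widehat{G}$---is resolved cleanly by the identity $\widehat{G'}=\widehat{G}/(u\sim v)$ together with the peripherality of $C$ in $\widehat{G}$. The remaining difficulty is the bookkeeping in sufficiency, especially for the subdivided copies of $K_4$: since $K_4$ is not bipartite, one must verify for each admissible $2$-coloring that the required monochromatic subdivisions genuinely permit an admissible contraction producing $K_{2,3}$. This is routine but needs the same careful, case-by-case attention as the nine cases of Theorem~\ref{thm:bipWagner}.
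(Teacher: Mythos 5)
Your proposal is correct and follows the paper's proof essentially step for step: necessity via the apex graph $\widehat{G}$, the identity $\widehat{G'}=(\widehat{G})'$, and the observation that a peripheral cycle of $G$ remains peripheral in $\widehat{G}$ so that the Jordan--Sch\"onflies argument from Theorem~\ref{thm:bipWagner} applies; sufficiency via the Chartrand--Harary subdivision characterization, the peripheral-cycle reduction (triangles in $K_4$, $4$-cycles in $K_{2,3}$) to edges subdivided at most once, and a deferred finite check of the nine colored subdivisions. If anything you are slightly more careful than the paper, which only asserts parenthetically that peripheral cycles of $G$ stay peripheral in $\widehat{G}$ and leaves the nine cases to the reader.
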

\begin{proof} First assume that $G$ is outerplanar.
To verify that $G$ does not contain $K_{2,3}$ as a bipartite minor,
it suffices to show that deletions and admissible contractions preserve
outerplanarity. This is clear for deletions. To deal with admissible
contractions, consider the graph $\widehat{G}$
defined right before the statement of the theorem.
Then $\widehat{G}$ is planar, although not bipartite, and if
$G'$ is obtained from $G$ by an admissible contraction of $a'$ with $a$, then
$\widehat{G'}=(\widehat{G})'$. (Note that a peripheral cycle of $G$ is also
a peripheral cycle of $\widehat{G}$, so the same contraction is
admissible in $\widehat{G}$.) As $(\widehat{G})'$ is planar
(the same argument as in the proof of Theorem \ref{thm:bipWagner} applies),
we infer that $G'$ is outerplanar.

Next assume that $G$ is not outerplanar. By a result of Chartrand
and Harary \cite{Chartrand-Harary67:Outerplanar}, $G$ contains a
subdivision of either $K_4$ or $K_{2,3}$. Let $H$ be such a subgraph
of $G$. As in the proof of Theorem \ref{thm:bipWagner}, we may assume
that each edge of the original $K_4$ ($K_{2,3}$, respectively) is subdivided at most
once. Thus, it suffices to show that $K_{2,3}$ is a bipartite minor of
each of the following nine bipartite graphs.
Given the coloring below, we can find $K_{2,3}$ with three red
vertices and two blue ones as a bipartite minor.

For subdivisions of $K_4$, we need to consider $H=H_{(i)}$ for
$i=4,3,2$, where in $H_{(i)}$ exactly $i$ of the original vertices of
$K_4$ are red (the other $4-i$ vertices are blue). For subdivisions of
$K_{2,3}$, let $X$ and $Y$ be the two sides of $K_{2,3}$, with
$|X|=2$ and $|Y|=3$, and for the bipartite subdivisions
$H_{(i,j)}$ as above, with exactly $i$ red vertices from $X$ and
$j$ red vertices from $Y$, we need to consider $H$ being one of
$H_{(2,3)},H_{(1,3)},H_{(0,3)},H_{(2,2)},H_{(1,2)},H_{(2,1)}
$. We leave the verification of these nine cases to the readers.
\end{proof}

Similarly, the following holds; we omit an easy proof.

\begin{theo}\label{thm:bipForest}
A bipartite graph $G$ is a forest if and only if $G$ does not
contain $K_{2,2}$ as a bipartite minor.
\end{theo}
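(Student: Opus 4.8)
The plan is to mimic the structure of the proofs of Theorems \ref{thm:bipWagner} and \ref{thm:bipOuterplanar}, but to exploit that the relevant topological obstruction here is the simplest possible one. Indeed, a bipartite graph is a forest precisely when it contains no cycle, and every cycle in a bipartite graph has even length $2k\geq 4$, hence is a subdivision of $K_{2,2}$. So no Kuratowski-type input beyond this elementary observation is needed.

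For the forward direction I would show that if $G$ is a forest, then $K_{2,2}\not<_b G$. The key point is that a forest has no cycles, hence no peripheral cycles, and therefore admits no admissible contraction at all. I would make this precise by induction along a defining sequence $G=G_0,G_1,\ldots,G_t=H$: the base case $G_0=G$ is a forest, and if $G_i$ is a forest then no admissible contraction is available, so $G_{i+1}$ is obtained from $G_i$ by a deletion, which preserves being a forest. Thus every bipartite minor $H$ of $G$ is again a forest. Since $K_{2,2}$ contains a $4$-cycle and so is not a forest, it cannot be a bipartite minor of $G$.

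For the converse, suppose $G$ is not a forest and fix a cycle $C$ in $G$; as $G$ is bipartite, $C$ has even length $2k\geq 4$. Deleting from $G$ all vertices and edges outside $C$ exhibits the cycle graph $C$ as a bipartite minor of $G$, so by concatenating sequences it suffices to prove $K_{2,2}<_b C$. I would argue by induction on $k$: when $k=2$ we already have $C=K_{2,2}$. For $k>2$, observe that the whole cycle $C$ is chordless and that removing all of its vertices leaves the empty graph, so $C$ is a peripheral cycle of the cycle graph $C$ itself. Hence for any length-two subpath $(u,w,v)$ along $C$, contracting $u$ with $v$ is admissible; this yields a cycle of length $2k-2$ with a single pendant vertex $w$ attached to it. Deleting $w$ produces the cycle of length $2k-2$, and the inductive hypothesis completes the reduction down to $K_{2,2}$.

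The argument involves no genuine obstacle; the only points needing a line of care are verifying that a full cycle is peripheral inside the cycle graph (it is chordless, and removing all its vertices does not increase the number of connected components), and checking that each admissible contraction lowers the cycle length by exactly two while leaving a graph on which the same step applies, so that the process terminates precisely at $K_{2,2}$.
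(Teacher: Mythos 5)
Your proof is correct. The paper in fact omits the proof of this theorem (``we omit an easy proof''), and your argument is precisely the intended one, mirroring the template of Theorems \ref{thm:bipWagner} and \ref{thm:bipOuterplanar}: a forest has no cycles, hence no peripheral cycles and no admissible contractions, so all its bipartite minors are forests and cannot be $K_{2,2}$; conversely, any cycle in a non-forest bipartite graph is isolated by deletions and then reduced two vertices at a time by admissible contractions (the whole cycle being peripheral in itself) plus pendant-vertex deletions down to $K_{2,2}$.
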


The following question arises naturally.

\begin{prob}
Are the bipartite linklessly embeddable graphs characterized by a
finite list of forbidden bipartite minors?
\end{prob}
\section{$(2,2)$-Laman graphs}
We now turn our discussion to $(2,2)$-Laman graphs. Note that if
$G=(V=A\uplus B, E)$ is $(2,2)$-Laman, then every vertex of $G$ has degree
at least two: indeed, if $v$ were a vertex of degree one, then $G-v$
would have $2(|A|+|B|-1)-3$ edges instead of at most $2(|A|+|B|-1)-4$
edges allowed by the definition of $(2,2)$-Laman graphs. Moreover, if $G$ is
$(2,2)$-Laman and $v$ is a vertex of degree two, then either $G$ is $K_{2,2}$
or $G-v$ is also $(2,2)$-Laman. Finally,
since $G$ has fewer than $2|V|$ edges, there is a vertex of $G$
that has degree at most three. Hence, we can assume that $G$ is a graph with minimal
degree three. The following theorem can thus be considered as a recursive
characterization of $(2,2)$-Laman graphs.

\begin{theo}\label{prop:GraphTheory}
Let $G=(V,E)$ be a bipartite $(2,2)$-Laman graph with minimal degree three.
Then every vertex $v$ of degree three has two
neighbors $x,y$ with the property that there exists a vertex $p$ that is adjacent
to $y$ and not adjacent to $x$, and such that the graph
$G'=(G - v) \cup xp$ is $(2,2)$-Laman.
\end{theo}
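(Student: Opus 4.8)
The plan is to reformulate the conclusion as a statement about ``tight'' vertex sets and then argue by contradiction, using the fact that the three neighbors of $v$ can never all lie in a single tight set of a $(2,2)$-Laman graph.

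First I would set up the reduction. Since $G$ is bipartite with minimum degree three, both parts have size at least three (a vertex in one part has all its neighbors in the other, so each part must have at least three vertices), hence after deleting $v$ the parts still have size at least two. Writing $v\in A$ and $N(v)=\{x_1,x_2,x_3\}\subseteq B$, and putting $H:=G-v$, note that for any admissible choice $x=x_i$, $y=x_j$, $p\in N(y)\setminus(N(x)\cup\{v\})$ the graph $G'=H\cup xp$ has exactly $2|V(G')|-4$ edges (we removed one vertex with its three edges and added one edge). Call a set $W\subseteq V\setminus\{v\}$ with $|W|\ge 3$ \emph{tight} if $e_G(W)=2|W|-4$. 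Every subgraph of $H$ is a subgraph of $G$ and hence satisfies (ii); therefore a violation of (ii) in $G'$ must use the new edge $xp$, and so $G'$ is $(2,2)$-Laman if and only if no tight set contains both $x$ and $p$ (equivalently, $xp$ is not spanned by $H$ in the associated $(2,4)$-sparsity matroid). Thus the theorem reduces to: there exist $i\ne j$ and $p\in N_H(x_j)\setminus N_H(x_i)$ such that no tight set contains $\{x_i,p\}$.

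Next I would record the two standard tools. Using that $W\mapsto 2|W|-4$ is modular and $W\mapsto e_G(W)$ is supermodular, one obtains the union lemma: if $W_1,W_2$ are tight and $W_1\cap W_2$ has at least three vertices, or has exactly two nonadjacent vertices, then $W_1\cup W_2$ is tight. One also obtains the absorption move: if $W$ is tight and $u\notin W$ has at least two neighbors in $W$, then $u$ has exactly two (three would violate (ii) for $W\cup\{u\}$) and $W\cup\{u\}$ is tight. The point of working in a bipartite graph is that the vertices I shall force into common intersections are either the $x_i$'s (pairwise nonadjacent, same side) or neighbors of the $x_i$'s lying in $A$ (again pairwise nonadjacent), so the intersections arising are of the clean nonadjacent type and the union lemma applies without the borderline difficulty.

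Then I would argue by contradiction. Suppose every admissible move fails. As computed above this means that for every $i$ and every $p\in N^*:=N_H(x_1)\cup N_H(x_2)\cup N_H(x_3)$ the pair $\{x_i,p\}$ lies in some tight set. I would use this, together with the building blocks $\{x_i,p,p'\}$ (a tight $K_{1,2}$ for any two neighbors $p,p'$ of $x_i$) and the $K_{2,2}$ tight sets coming from common neighbors, to amalgamate these tight sets---via repeated application of the union lemma along shared nonadjacent pairs of $A$-vertices, and the absorption move to pull in each $x_j$---into a single tight set $W^\ast$ (avoiding $v$) that contains all of $x_1,x_2,x_3$. This is impossible: $W^\ast\cup\{v\}$ would then be a subgraph of $G$ on $|W^\ast|+1\ge 4$ vertices with $e_G(W^\ast)+3=2|W^\ast|-1=2|W^\ast\cup\{v\}|-3$ edges, violating condition (ii) for the $(2,2)$-Laman graph $G$. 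This contradiction shows some admissible move succeeds, proving the theorem. Conceptually, the same statement is the assertion that the fundamental circuit of the added edge $x_ip$, relative to the basis $E(G)$ of the sparsity matroid, can be made to meet the set $\{vx_1,vx_2,vx_3\}$.

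The main obstacle is precisely the amalgamation step: turning ``each cross pair $\{x_i,p\}$ lies in \emph{some} tight set'' into ``all three $x_i$ lie in \emph{one} tight set.'' The difficulty is that two tight sets through a common $x_i$ may meet only in $\{x_i\}$, where the union lemma gives nothing; the work is to arrange overlaps of size at least two (by routing through two neighbors $p,p'$ of a given $x_j$, which are nonadjacent $A$-vertices) and to treat the degenerate pairs where $N_H(x_j)\subseteq N_H(x_i)$ (there $x_i,x_j$ already share at least two neighbors and span a tight $K_{2,2}$ directly). Getting these overlaps under control, so that the union lemma can be iterated to absorb all three neighbors, is where essentially all the content lies.
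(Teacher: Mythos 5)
Your reduction is correct and matches the paper's: $G'=(G-v)\cup xp$ has the right edge count automatically, so $G'$ is $(2,2)$-Laman if and only if no critical (``tight'') set of $G-v$ contains both $x$ and $p$; your union lemma and absorption move are exactly the supermodularity tools the paper uses throughout, and your terminal contradiction (a tight $W^\ast\supseteq N(v)$ with $v\notin W^\ast$ forces $W^\ast\cup\{v\}$ to violate condition (ii)) is sound arithmetic. But the proof stops there: the amalgamation step, which you yourself flag as carrying ``essentially all the content,'' is not carried out, and the mechanism you sketch for it does not work as stated. A tight set $T$ containing $\{x_i,p\}$ need not contain a second neighbor $p'$ of $x_j$, so $T$ may meet the tight triple $\{x_j,p,p'\}$ in the single vertex $p$, where the union lemma gives nothing; more fundamentally, nothing in the failure hypothesis forces two distinct vertices $x_i,x_j$ into a common tight set at all, so the target ``one tight set containing all three neighbors'' is not reachable by chaining unions along the pairs you have.

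The paper's proof avoids this target entirely and closes the count by four devices absent from your plan. First, the choice of $x$ is asymmetric: a counting argument on $A\cup\{b_1,b_2,b_3\}$ shows at least two neighbors $b_1,b_2$ of $a_0=v$ have non-neighbors in $A$; one then fixes a \emph{maximal} set $Z\supseteq\{b_1,b_2\}$ with $a_0\notin Z$ and $E(Z)=2|Z|-4$ (a barrier set, possibly just the bare pair), and labels so that $|M(b_1)|\ge|M(b_2)|$, where $M(b_i)$ is the set of non-neighbors of $b_i$ outside $Z\cup\{a_0\}$; the candidates $p$ form the set $A_1\subseteq M(b_1)$ of vertices outside $Z$ adjacent to $b_2$ or $b_3$. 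Second, maximality of $Z$ forces each maximal critical $X_p$ (for failing $p\in A_1$) to \emph{avoid} $b_2$ --- the opposite of amassing the neighbors together. Third, overlaps are produced not by routing through pairs $p,p'$ but through the star $X_0$ of $b_1$: criticality of $X_p$ gives $b_1$ at least two neighbors inside $X_p$, so $|X_0\cap X_p|\ge 3$, and the union $Y_s=X_0\cup X_1\cup\cdots\cup X_s$ satisfies only the \emph{deficient} bound $E(Y_s)\ge 2|Y_s|-4-(k-2)$, where $k=\deg_{G-a_0}(b_1)$ --- the star itself is not tight, so your plan of iterating the union lemma to keep genuine tightness would stall here. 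Fourth, the deficit is repaired when unioning with $Z$: the $d_2$ cross-edges from $b_2\in Z$ into $A_1\subseteq Y_s\setminus Z$, together with $d_2\ge d_1$ (equivalent to $|M(b_1)|\ge|M(b_2)|$), yield $E(Z\cup Y_s)\ge 2|Z\cup Y_s|-4$, so $Z\cup Y_s$ is critical, whence $Y_s\subseteq Z$ by maximality of $Z$ --- contradicting the fact that $A_1$ is nonempty and disjoint from $Z$. Without the barrier set, the asymmetric choice of $b_1$, and the deficit bookkeeping, the counting in your approach has no way to close, so what you have is a correct frame around the theorem rather than a proof of it.
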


\begin{proof}
If $X\subseteq V$, we write $E(X)=|E(G[X])|$ --- the cardinality of the
edge set of the subgraph of $G$ induced by $X$. A subset $X$ of $V$ is
\emph{critical} if $|X| \geq 3$ and $E(X)=2|X|-4$; equivalently, if
$|X|\geq 3$ and $G[X]$ is $(2,2)$-Laman or $K_{2,1}$.

Let $(A,B)$ be a bipartition of $V$.
In what follows vertices called $a_i$ belong to $A$, and vertices called
$b_i$ belong to $B$. Suppose that $a_0\in A$ is a vertex of degree $3$
and let $b_1,b_2,b_3$ be the neighbors of $a_0$.
We prove the theorem in several steps, which we number below by (i*).

\smallskip\noindent{\bf (0*)}
{\em Every subset $X\subseteq V$ with $|X|\geq 2$ such that $G[X]$
is not an edge satisfies $E(X)\leq 2|X|-4$.}

\begin{proof}
This is immediate from the definition of $(2,2)$-Laman graphs.
\end{proof}

\smallskip\noindent{\bf (1*)}
{\em At least two neighbors of $a_0$ have non-neighbors in $A$.}

\begin{proof}
The subgraph induced on $A\cup \{b_1,b_2,b_3\}$ has $|A|+3$ vertices and
hence at most $2|A|+2$ edges. Thus if $b_1,b_2$ are adjacent to all of $A$
then $b_3$ has degree at most two, a contradiction.
\end{proof}

Let $b_1,b_2$ be as guaranteed in (1*).
Let $Z\subseteq V$ be maximal with $2|Z|-4$ edges, containing $b_1,b_2$ and
not containing $a_0$ (possibly $Z=\{b_1,b_2\}$).

\smallskip\noindent{\bf (2*)}
{\em The element $b_3$ is not in $Z$ and has at most one neighbor in $Z$.
In particular, $b_3$ has a neighbor in $A\setminus (Z\cup\{a_0\})$, and so the
latter set is nonempty.}

\begin{proof}
If $b_3\in Z$ then $Z\cup\{a_0\}$ violates Laman condition (ii), a
contradiction, and so $b_3\notin Z$. Now $G[Z\cup\{a_0\}]$ is $(2,2)$-Laman,
hence the Laman condition (ii) for $Z\cup \{a_0,b_3\}$ shows that $b_3$ has
at most one neighbor in $Z$. As $\deg(b_3)\geq 3$ the rest of (2*) follows.
\end{proof}

Denote by $M(b_i)$ the set of non-neighbors of $b_i$ in
$A\setminus (Z\cup \{a_0\})$, and w.l.o.g.~assume $|M(b_1)|\geq |M(b_2)|$.
Note that the maximality of $Z$ implies that

\smallskip\noindent{\bf (3*)} {\em
Every vertex in $A\setminus (Z\cup \{a_0\})$ has at most one neighbor in $Z$.}

In particular, we will use the following:

\smallskip\noindent{\bf (4*)}
{\em No element in $A\setminus (Z\cup \{a_0\})$ is a neighbor of both
$b_1$ and $b_2$.}






\smallskip\noindent{\bf (5*)}
{\em The set $A_1$ of neighbors of either $b_2$ or $b_3$ in $M(b_1)$ is nonempty.}
\begin{proof}
Either there exists $p \in M(b_1)\setminus M(b_2)$ (that is, $p$ is a
neighbor of $b_2$ but not of $b_1$), in which case we are done, or
$M(b_1)=M(b_2)$. In the latter case
$M(b_1)=A\setminus (Z\cup \{a_0\})$ by (4*).
Hence by (2*), there is a vertex $p\in M(b_1)$ that is a neighbor of
$b_3$. The statement follows.
\end{proof}

We need to prove that there is $p\in A_1$ such that
no critical set contains $p$ and $b_1$ and not $a_0$.
Assume the contrary.
Then for every $p\in A_1$ there is some critical set $X_p$
containing $p$ and $b_1$, and not containing $a_0$. We will reach a
contradiction to (5*). We may assume that the sets $X_p$
are maximal with these properties.
%

\smallskip\noindent{\bf (6*)} {\em If $p\in A_1$ then $b_2\notin X_p$.}

\begin{proof}
Assume by contradiction $b_2\in X_p$. By maximality of $Z$ and as
$p\notin Z$, it is enough to show that $X_p\cup Z$ is critical. Indeed,
\begin{eqnarray*}
\begin{array}{c}
E(Z\cup X_p) \geq E(Z)+E(X_p)-E(Z\cap X_p) = \\
2(|Z|+|X_p|)-8-E(Z\cap X_p) \geq 2(|Z|+|X_p|)-8-(2|Z\cap X_p|-4) \geq \\
2|Z\cup X_p|-4,
\end{array}
\end{eqnarray*}
where the middle inequality is by (0*), as $\{b_1,b_2\}\subseteq Z\cap X_p$
are two vertices on the same side.
\end{proof}

\smallskip\noindent{\bf (7*)} {\em For every $p\in A_1$,
$b_1$ has at least two neighbors in $X_p$.}

\begin{proof}
Since $X_p$ is critical containing two non-adjacent vertices from
opposite sides (namely $p$ and $b_1$), we deduce that $|X_p| \geq 4$. Then
$E(X_p \setminus b_1) \leq 2(|X_p|-1)-4$ by Laman condition (ii), and so
$b_1$ has at least two neighbors in $X_p$.
\end{proof}

Let $X_0\subseteq V$ (in our notations $0\notin V$) consist of
$b_1$ and its neighbors different from $a_0$.
For notational convenience, identify $A_1$ with $\{1,2,\ldots,s\}$, and
let $Y_s=X_0\cup X_1\cup\cdots\cup X_s$. Let $k=|X_0|-1$, namely the
degree of $b_1$ in $G[V\setminus a_0]$ (hence $k\geq 2$).

\smallskip\noindent{\bf (8*)} $E(Y_s)\geq 2|Y_s|-4-(k-2)$.

\begin{proof}
 Let $Y_i=\cup_{0\leq j\leq i}X_j$. We show by induction on $i$ that
$E(Y_i)\geq 2|Y_i|-4-(k-2)$. The case $i=0$ trivially holds with equality.
For $i\geq 1$, $|X_0\cap X_i|\geq 3$ by (7*), so applying Laman condition
(ii) to $Y_{i-1}\cap X_i$ we see that
\begin{eqnarray*}
\begin{array}{c}
E(Y_i) \geq E(Y_{i-1})+E(X_i)-E(Y_{i-1}\cap X_i) \geq \\
(2|Y_{i-1}|-4-(k-2)) + (2|X_i|-4) - (2|Y_{i-1}\cap X_i|-4) = \\
2|Y_{i-1}\cup X_i|-4-(k-2),
\end{array}
\end{eqnarray*}
as desired.
\end{proof}

Next we will show that $Z\cup Y_s$ is critical.

\smallskip\noindent{\bf (9*)}
{\em $Z\cap Y_s \subseteq X_0$, and thus $Z\cap Y_s = Z\cap X_0$.}

\begin{proof}
 Assume the contrary. Then there is $i$ such that $Z\cap X_i$ has a
non-neighbor of $b_1$. Hence by (0*), $E(Z\cap X_i)\leq 2|Z\cap X_i|-4$.
Thus, our usual yoga shows $E(Z\cup X_i)\geq 2|Z\cup X_i|-4$. By maximality
of $X_i$, we conclude that $Z\subseteq X_i$, and so $b_2\in X_i$.
This contradicts (6*).
\end{proof}

\smallskip\noindent{\bf (10*)} {\em $Z\cup Y_s$ is critical.}

\begin{proof}
Denote by $d_i$ the number of neighbors of $b_i$ in $A\setminus
(Z\cup\{a_0\})$.
 By (9*), $E(Z\cap Y_s)=k-d_1$ and $|Z\cap Y_s|=k-d_1+1$.
 There are $d_2$ edges from $b_2\in Z$ into $A_1\subseteq Y_s\setminus Z$ by (4*).
Therefore,
\begin{eqnarray*}
\begin{array}{c}
E(Z\cup Y_s)  \geq E(Z)+E(Y_s)-E(Z\cap Y_s)+d_2  \geq \\
(2|Z|-4)+(2|Y_s|-4-(k-2))-(k-d_1)+d_2  = \\
2|Z\cup Y_s|-4+(d_2-d_1),
\end{array}
\end{eqnarray*}
and by $|M(b_1)|\geq |M(b_2)|$  we have $d_2-d_1\geq 0$. Thus $Z\cup Y_s$
is critical. \end{proof}

By maximality of $Z$, (10*) implies $Y_s\subseteq Z$. Thus $A_1\subseteq Z$.
Hence by the definition of $A_1$, $A_1$ must be empty. This contradicts (5*) and
completes the proof of the theorem.
\end{proof}

\bibliography{gbiblio}
\bibliographystyle{plain}

\section{Appendix}
For each of the nine graphs described in the proof of Theorem \ref{thm:bipWagner},
we describe the sequences of deletions and admissible contractions yielding
$K_{3,3}$ as a bipartite minor. For each contraction we indicate a
peripheral cycle showing that the contraction is admissible,
called a \emph{witness cycle}.

We start with the subdivisions of $K_5$.
Denote the vertices of $K_5$ by $v_i$ where $i\in[5]$
(here $[n]=[1,n]=\{1,2,\cdots,n\}$), and the subdivision vertex
of the edge $v_iv_j$ in $G_{(i)}$ by $v_{ij}=v_{ji}$. When referring
to vertices after performing contractions on $G_{(i)}$, we use
\emph{any} representative from the vertices of $G_{(i)}$;
this should cause no confusion.

\smallskip\noindent\textbf{Case 1: $G_{(5)}$.}
Here all $v_i$ where $i\in [5]$ are red.
\begin{enumerate}
\item{}Contract $v_{15}$ with $v_{13}$.
Witness cycle: $(v_{15} v_1 v_{13}, v_3 v_{35} v_5)$.
\item{}Contract $v_{25}$ with $v_{23}$.
Witness cycle: $(v_{25} v_2 v_{23}, v_3 v_{35} v_5)$.
\item{}Contract $v_{45}$ with $v_{14}$.
Witness cycle: $(v_{45} v_4 v_{14}, v_1 v_{15} v_5)$.
\item{}Contract $v_{1}$ with $v_{2}$.
Witness cycle: $(v_{1} v_{12} v_{2}, v_{25} v_{5} v_{15})$.
\item{}Contract $v_{3}$ with $v_{4}$.
Witness cycle: $(v_{4} v_{34} v_{3}, v_{35} v_{5} v_{54})$.
\end{enumerate}
Call the resulting graph $H$.
The induced subgraph of $H$ on the $3$ red vertices $v_1,v_3,v_5$
and the $3$ blue vertices $v_{15},v_{25},v_{45}$ is $K_{3,3}$.

\smallskip\noindent\textbf{Case 2: $G_{(4)}$.}
Here all $v_i$ where $i\in [4]$ are red, $v_5$ is blue.
\begin{enumerate}
\item{}Contract $v_{34}$ with $v_{23}$.
Witness cycle: $(v_{34} v_3 v_{23}, v_2 v_{24} v_4)$.
\item{}Contract $v_{12}$ with $v_{14}$.
Witness cycle: $(v_{12} v_1 v_{14}, v_4 v_{24} v_2)$.
\item{}Contract $v_{12}$ with $v_{13}$.
Witness cycle: $(v_{12} v_1 v_{13}, v_3 v_{32} v_2)$.
\end{enumerate}
Call the resulting graph $H$.
The induced subgraph of $H$ on the $3$ red vertices $v_2,v_3,v_4$
and the $3$ blue vertices $v_{5},v_{34},v_{12}$ is $K_{3,3}$.

\smallskip\noindent\textbf{Case 3: $G_{(2)}$.}
Here all $v_i$ where $i\in [2]$ are red,
the other $3$ vertices of $G_{(2)}$ are blue.
\begin{enumerate}
\item{}Contract $v_{34}$ with $v_{35}$.
Witness cycle: $(v_{34} v_3 v_{35}, v_5 v_{45} v_4)$.
\end{enumerate}
Call the resulting graph $H$.
The induced subgraph of $H$ on the $3$ red vertices $v_1,v_2,v_{34}$
and the $3$ blue vertices $v_{3},v_{4},v_{5}$ is $K_{3,3}$.

\bigskip

We now turn to the six subdivisions of $K_{3,3}$.
Let the sides of $K_{3,3}$ be $X=\{v_1,v_2,v_3\}$ and
$Y=\{v_4,v_5,v_6\}$. We use the same notation $v_{ij}$ as in the case
of subdivisions of $K_5$.

\smallskip\noindent\textbf{Case 1: $G_{(3,3)}$.}
Here all $v_i$ where $i\in [6]$ are red.
\begin{enumerate}
\item{}Contract $v_{15}$ with $v_{35}$.
Witness cycle: $(v_{15} v_5 v_{35}, v_3 v_{34} v_4 v_{14} v_1)$.
\item{}Contract $v_{14}$ with $v_{24}$.
Witness cycle: $(v_{14} v_4 v_{24}, v_2 v_{26} v_6 v_{16} v_1)$.
\item{}Contract $v_{26}$ with $v_{36}$.
Witness cycle: $(v_{26} v_6 v_{36}, v_3 v_{34} v_4 v_{24} v_2)$.
\item{}Contract $v_{1}$ with $v_{6}$.
Witness cycle: $(v_{1} v_{16} v_{6}, v_{36} v_{2} v_{24})$.
\item{}Contract $v_{2}$ with $v_{5}$.
Witness cycle: $(v_{2} v_{25} v_{5}, v_{35} v_{3} v_{34} v_4 v_{24})$.
\item{}Delete $v_{16}$, then delete $v_{25}$.
\item{}Contract $v_3$ with $v_4$.
Witness cycle:  $(v_{3} v_{34} v_{4}, v_{24} v_{1} v_{36})$.
\end{enumerate}
Call the resulting graph $H$.
The induced subgraph of $H$ on the $3$ red vertices $v_1,v_2,v_3$
and the $3$ blue vertices $v_{15},v_{14},v_{26}$ is $K_{3,3}$.

\smallskip\noindent\textbf{Case 2: $G_{(3,2)}$.}
Here all $v_i$ where $i\in [5]$ are red and $v_6$ is blue.
\begin{enumerate}
\item{}Contract $v_{15}$ with $v_{35}$.
Witness cycle: $(v_{15} v_5 v_{35}, v_3 v_{34} v_4 v_{14} v_1)$.
\item{}Contract $v_{14}$ with $v_{24}$.
Witness cycle: $(v_{14} v_4 v_{24}, v_2 v_{25} v_5 v_{15} v_1)$.
\item{}Contract $v_{3}$ with $v_{4}$.
Witness cycle: $(v_{3} v_{34} v_{4}, v_{24} v_{2} v_{25} v_5 v_{15})$.
\item{}Contract $v_{2}$ with $v_{5}$.
Witness cycle: $(v_{2} v_{25} v_{5}, v_{15} v_{1} v_6)$.
\end{enumerate}
Call the resulting graph $H$.
The induced subgraph of $H$ on the $3$ red vertices $v_1,v_2,v_3$
and the $3$ blue vertices $v_{15},v_{14},v_{6}$ is $K_{3,3}$.

\smallskip\noindent\textbf{Case 3: $G_{(3,1)}$.}
Here all $v_i$ where $i\in [4]$ are red and $v_5, v_6$ are blue.
\begin{enumerate}
\item{}Contract $v_{24}$ with $v_{34}$.
Witness cycle: $(v_{24} v_4 v_{34}, v_3 v_{5} v_2)$.
\item{}Contract $v_{1}$ with $v_{4}$.
Witness cycle: $(v_{1} v_{14} v_{4}, v_{34} v_3 v_{5})$.
\end{enumerate}
Call the resulting graph $H$.
The induced subgraph of $H$ on the $3$ red vertices $v_1,v_2,v_3$
and the $3$ blue vertices $v_{24},v_{5},v_{6}$ is $K_{3,3}$.

\smallskip\noindent\textbf{Case 4: $G_{(2,2)}$.}
Here all $v_i$ where $i\in [2,5]$ are red and $v_1, v_6$ are blue.
\begin{enumerate}
\item{}Contract $v_{24}$ with $v_{34}$.
Witness cycle: $(v_{34} v_4 v_{24}, v_2 v_{25} v_5 v_{53} v_3)$.
\item{}Contract $v_{25}$ with $v_{35}$.
Witness cycle: $(v_{25} v_5 v_{35}, v_3 v_{34} v_2)$.
\item{}Contract $v_{4}$ with $v_{5}$.
Witness cycle: $(v_{4} v_{1} v_{5}, v_{25} v_3 v_{34})$.
\item{}Contract $v_{1}$ with $v_{6}$.
Witness cycle: $(v_{1} v_{16} v_{6}, v_{2} v_{25} v_{4})$.
\end{enumerate}
Call the resulting graph $H$.
The induced subgraph of $H$ on the $3$ red vertices $v_2,v_3,v_4$
and the $3$ blue vertices $v_{24},v_{25},v_{6}$ is $K_{3,3}$.

\smallskip\noindent\textbf{Case 5: $G_{(2,1)}$.}
Here all $v_i$ where $i\in [2,4]$ are red and $v_1, v_5, v_6$ are blue.
\begin{enumerate}
\item{}Contract $v_{16}$ with $v_{15}$.
Witness cycle: $(v_{16} v_1 v_{15}, v_5 v_{3} v_6)$.
\item{}Contract $v_{24}$ with $v_{34}$.
Witness cycle: $(v_{24} v_4 v_{34}, v_3 v_{6} v_2)$.
\item{}Contract $v_{1}$ with $v_{34}$.
Witness cycle: $(v_{1} v_{4} v_{34}, v_{2} v_5 v_{15})$.
\end{enumerate}
Call the resulting graph $H$.
The induced subgraph of $H$ on the $3$ red vertices $v_{16},v_2,v_3$
and the $3$ blue vertices $v_{1},v_{5},v_{6}$ is $K_{3,3}$.

\smallskip\noindent\textbf{Case 6: $G_{(3,0)}$.}
In this case $G_{(3,0)}=H=K_{3,3}$.
This completes the proof of Theorem \ref{thm:bipWagner}. $\square$

\end{document}